\title[Theta constants for 3-sheeted covers of the sphere]{Theta constants identities for Jacobians of cyclic 3-sheeted covers of the sphere and representations of the symmetric group}
\author[Kopeliovich]{by Yaacov Kopeliovich}
\dedicatory{To my friend Elizabeth Drake}
\date{ 04 April 2007}
\address{5736 Las Virgenes Rd. Calabasas CA 91302}
\newtheorem{thm}{Theorem}[section]
\newtheorem{cor}[thm]{Corollary}
\newtheorem{lem}[thm]{Lemma}
\theoremstyle{definition}
\newtheorem{defn}[thm]{Definition}
\theoremstyle{Assumption}
\theoremstyle{remark}
\begin{document}
\begin{abstract}
We find identities between theta constants with rational
characteristics evaluated at period matrix of  $R,$  a cyclic 3
sheeted cover of the sphere with $3k$ branch points
$\lambda_1...\lambda_{3k}.$ These identities follow from Thomae
formula \cite{BR}. This formula expresses powers of theta constants
as polynomials in $\lambda_1...\lambda_{3k}.$ We apply the
representation of the symmetric group to find relations between the
polynomials and hence between the associated theta constants.
\end{abstract}
\maketitle

\section{Introduction}

Let $R$ be a Riemann surface with the equation: $$y^3 =
\prod_{i=1}^{i=3m}(z-\lambda_i)(*).$$ We find relations that are
satisfied by theta constants with rational characteristics evaluated
at $\tau_R,$ the period matrix of $R.$ Special type identities for
period matrices are known in the case of a general Riemann surface (
Schottky-Jung identities). For hyperelliptic curves there are
vanishing theta constants of even characteristics that characterize
the associated period matrix. According to Mumford, \cite{Mu}
special relations of non vanishing of theta constants  evaluated at
period matrices of hyperelliptic curves were obtained by Frobenius.

The original Schottky problem seeks special relations among theta
constants that characterize the entire moduli space of algebraic
curves of genus $g.$ In this note we seek special relations that are
satisfied by  $n$-sheeted cyclic covers of the sphere. When $n=2$
cyclic covers are just hyperelliptic curves. The next case is $n=3$
and we find relations between theta constants with rational
characteristics evaluated at $\tau_R$ the period matrices of such
curves .

These identities are a result of  Thomae formula for cyclic $n$
sheeted covers of the sphere. This formula expresses powers of such
theta constants evaluated at the period matrix $\tau_R$ through
polynomial expression of $\lambda_i$. A relation between these
polynomials produces a relation between associated theta constants.
Applying the  representation theory of the symmetric group, $S_{3m}$
we produce  a basis for the vector space spanned by the polynomials
and as a result relations between the associated theta constants.

\smallskip
For the simplest case of $6$ branch points our results overlap with
results of Matsumoto \cite{Ma}. In his paper Matsumoto finds the
explicit action of $S_6$ on theta constants evaluated at $\tau_R$
and expresses branch points $\lambda_i$ as rational functions of
theta constants. As a result he writes identities between cubic
powers of these constants which essentially coincide with the
identities obtained by us in the last section of our note. Using the
representation theory of $S_6$ we see that the space generated by
theta constants is 5 dimensional. This seems to be a new result even
in this case. We note that the Algebraic dimension of this
particular family of curves is 3.

This work was partially done during a visit to the TAMU math
department and the author thanks the department for the invitation
and kind hospitality. I thank Samuel Grushevsky and Mike Fried for
constructive remarks on this note.

\section {Thomae formula for cyclic covers and relations
between theta constants} We explain the general Thomae formula
following \cite{Na} for an algebraic curve $R$ given by the
equation:
$$
y^3 = \prod_{i=1}^{i=3m}(z-\lambda_i)(*)
$$
We denote $f:R\mapsto \mathbb{CP}^{1}$  the projection $(z,y)\mapsto
z.$ Define $Q_i=f^{-1}(\lambda_i),$ to be the unique branch point on
$R$ that is the pre image of $\lambda_i.$  Fix a homology basis
$a_1,a_2...a_{3m-2}, b_1,b_2,...,b_{3m-2}$ on $R$ such that the
intersections are $a_i a_j = 0 = b_i b_j $ and $a_i b_j = 1.$ Let
$v_1...v_{3m-2}$ be a basis of  standard holomorphic differentials
dual to the basis $a_1,a_2...a_{3m-2}, b_1,b_2,...,b_{3m-2}$ i.e.
$\int_{a_i}v_j=0, \int_{b_i}v_j=\delta_{ij}.$ Now fix an ordering of
$\lambda_i.$ Let $\phi$ be the automorphism of order $3$ defined by
$(z,y)\mapsto (z, \omega y)$ for $\omega^3=1.$ We write $\alpha
\equiv \beta$ for linear equivalent of divisors, i.e. if there
exists a function $g:R\mapsto CP^{1}$ and $div(g) = \alpha-\beta.$
The group $Div^0/\equiv$ is $Jac(R),$ the Jacobian of $R.$ ($Div^0$
- divisors of degree $0.$) Let $\psi$ be the mapping $\psi : Div
\mapsto Div/\equiv.$ Then the following lemma is true:

\begin{lem}
Let $P_1,P_2\in R, P_1\neq P_2$ and
$$D_i=P_i+\phi(P_i)+\phi^2(P_i), i=1,2$$ then $\psi(D_1)\equiv \psi(D_2).$
\end{lem}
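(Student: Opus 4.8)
The plan is to recognize the divisor $D_i$ as the full fiber $f^{-1}(f(P_i))$ of the degree-$3$ projection $f$, and then to invoke the elementary fact that any two fibers of a holomorphic map to $\mathbb{CP}^1$ are linearly equivalent. Concretely, the automorphism $\phi$ generates the deck group of the cyclic cover $f\colon R\to\mathbb{CP}^1$, since $f\circ\phi=f$ and $\phi$ has order $3=\deg f$. Hence for a point $z_0\in\mathbb{CP}^1$ that is not a branch value the fiber $f^{-1}(z_0)$ consists of three distinct points forming a single $\phi$-orbit, so if $f(P_i)=z_0$ then $D_i=P_i+\phi(P_i)+\phi^2(P_i)=f^*(z_0)$. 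If $P_i$ is one of the branch points $Q_j$, then $\phi(P_i)=\phi^2(P_i)=P_i$ and $D_i=3Q_j$, which again equals $f^*(\lambda_j)$ because the ramification there is total of index $3$ (read off from the local equation $y^3=\prod_k(z-\lambda_k)$); and since $3m\equiv 0\pmod 3$ the point at infinity is unramified as well. In every case $D_i=f^*\big(f(P_i)\big)$.

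Now set $z_i=f(P_i)\in\mathbb{CP}^1$. If $z_1=z_2$ then $D_1=D_2$ and there is nothing to prove, so assume $z_1\neq z_2$. Choose a meromorphic function $h$ on $\mathbb{CP}^1$ with $\operatorname{div}(h)=z_1-z_2$ — for instance $h(w)=(w-z_1)/(w-z_2)$ when both values are finite, with the obvious modification if one of them is $\infty$. Then $g:=h\circ f$ is a meromorphic function on $R$ whose divisor is the pullback $f^*(\operatorname{div} h)=f^*(z_1)-f^*(z_2)=D_1-D_2$. Thus $D_1-D_2=\operatorname{div}(g)$ is a principal divisor of degree $0$, i.e.\ $D_1\equiv D_2$, which is exactly the assertion $\psi(D_1)\equiv\psi(D_2)$.

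There is essentially no serious obstacle here; the only points requiring care are bookkeeping ones. First, one must verify the identification $D_i=f^*(f(P_i))$ with the correct multiplicities at the ramification points $Q_j$ and at the three points over $\infty$, as indicated above. Second, one has to isolate the degenerate case $f(P_1)=f(P_2)$ with $P_1\neq P_2$ — which forces $P_2\in\{\phi(P_1),\phi^2(P_1)\}$ and hence $D_1=D_2$ — since there the function $h$ would be constant. Once these are dispatched, the proof reduces to pulling back a linear function on $\mathbb{CP}^1$, and I expect the write-up to follow this outline, possibly with the multiplicity check at the branch points spelled out a little more explicitly.
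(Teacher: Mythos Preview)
Your proposal is correct and follows essentially the same approach as the paper: the paper simply writes down the meromorphic function $f_1(P)=\dfrac{f(P)-f(P_1)}{f(P)-f(P_2)}$ and observes that $\operatorname{div}(f_1)=D_1-D_2$, which is exactly your $g=h\circ f$. Your version is more thorough in checking the fiber identification $D_i=f^*(f(P_i))$ with multiplicities and in handling the degenerate case $f(P_1)=f(P_2)$, but the core idea is identical.
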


\begin{proof}
Let $f_1(P) =\frac{f(P)-f(P_1)}{f(P)-f(P_2)}$ , then $div(f_1) =
D_1-D_2.$
\end{proof}

Define $D=\psi\left(P+\phi(P_i)+\phi^2(P_i)\right)$ as the
equivalence class in the Jacobian.

\begin{lem}
Let $K$ be the canonical divisor of $R$ Then the following holds:
\begin{enumerate}
    \item $$D\equiv 3Q_i \equiv  \infty_1+\infty_2+\infty_3$$
    \item $$K \equiv (2m-2)D$$
    \item $$\sum_1^{3m} Q_i \equiv mD$$
    \end{enumerate}
\end{lem}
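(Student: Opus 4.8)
The plan is to establish each of the three claims by exhibiting an explicit meromorphic function or differential on $R$ whose divisor displays the asserted linear equivalence, and then reading off the orders of vanishing at the branch points $Q_i$ and at the three points $\infty_1,\infty_2,\infty_3$ lying over $z=\infty$. The preliminary local facts I need are the following: at $Q_i$ the cover is totally ramified, so a local parameter is (a unit times) $y$ and $z-\lambda_i$ vanishes there to order $3$; over $z=\infty$ the cover is unramified (since $3m$ is divisible by $3$), so there are exactly three points $\infty_k$, at each of which $z$ has a simple pole and, because $y^3=\prod_{i=1}^{3m}(z-\lambda_i)$ has degree $3m$ in $z$, the function $y$ has a pole of order $m$. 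I will also use $g=3m-2$, which follows from Riemann--Hurwitz, as a check on divisor degrees.

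For (1): for generic $P$ the function $z-f(P)$ has simple zeros along the fiber $P,\phi(P),\phi^{2}(P)$ and simple poles at $\infty_1,\infty_2,\infty_3$, so $\operatorname{div}(z-f(P))=P+\phi(P)+\phi^{2}(P)-\infty_1-\infty_2-\infty_3$; this both reproves that $D$ is well defined (as in Lemma 2.1) and gives $D\equiv\infty_1+\infty_2+\infty_3$. Specializing the constant to a branch value, $z-\lambda_i$ has a triple zero at $Q_i$ and a simple pole at each $\infty_k$, so $\operatorname{div}(z-\lambda_i)=3Q_i-\infty_1-\infty_2-\infty_3$, and therefore $D\equiv 3Q_i\equiv\infty_1+\infty_2+\infty_3$.

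For (3): the function $y$ itself works. It vanishes to order $1$ at each of the $3m$ points $Q_i$ and has a pole of order $m$ at each $\infty_k$, so $\operatorname{div}(y)=\sum_{i=1}^{3m}Q_i-m(\infty_1+\infty_2+\infty_3)$, whence $\sum_{i=1}^{3m}Q_i\equiv m(\infty_1+\infty_2+\infty_3)\equiv mD$ by (1). It is worth stressing that one cannot merely divide the relation $3\sum Q_i\equiv 3mD$, which follows at once from (1), by $3$, since the Jacobian carries $3$-torsion; producing $\operatorname{div}(y)$ directly sidesteps this.

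For (2): apply the differential $dz$. By the local analysis it has a double zero at each $Q_i$ and a double pole at each $\infty_k$, so $K\equiv\operatorname{div}(dz)=2\sum_{i=1}^{3m}Q_i-2(\infty_1+\infty_2+\infty_3)$, which has degree $6m-6=2g-2$ as it must; substituting $\sum Q_i\equiv mD$ from (3) and $\infty_1+\infty_2+\infty_3\equiv D$ from (1) gives $K\equiv 2mD-2D=(2m-2)D$. The only genuine labor in the whole argument is the careful bookkeeping of these local orders at the ramification points and at infinity; once that is pinned down, all three identities are immediate.
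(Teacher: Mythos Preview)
Your argument is correct and follows the same strategy as the paper: exhibit explicit meromorphic functions or differentials on $R$ and read off their divisors. For (1) you do exactly what the paper does (specialize the function $z-c$ from the preceding lemma). For (2) and (3) the paper is terser: it simply records that a single holomorphic differential of the form $z\,dz/y^{2}$ (up to what is likely a typo in the multiplier; $(z-\lambda_{1})^{2m-2}\,dz/y^{2}$ actually has divisor $(6m-6)Q_{1}$) yields $K\equiv(6m-6)Q_{1}\equiv(2m-2)D$, and leaves (3) implicit. Your route instead separates the steps, using $\operatorname{div}(y)$ to get (3) directly and then $\operatorname{div}(dz)$ together with (1) and (3) to get (2); this is a harmless reshuffling of the same local computations, and your explicit remark that one cannot simply divide $3\sum Q_{i}\equiv 3mD$ by $3$ because of $3$-torsion is a worthwhile point the paper omits.
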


\begin{proof} The first item follows exactly as in the previous
lemma. To show the rest, note that  $z\frac{dz}{w^2}$ is a
holomorphic differential with the divisor $Q_1^{6m-6}.$
\end{proof}

Now let $\Lambda=\left\{\Lambda_1, \Lambda_2, \Lambda_3\right\} $ be
a partition of $\left\{1,2,3,4,5,...3m\right\}$ with $|\Lambda_i|=m$
for $i=1,2,3$. We are interested in the following divisor
$e_\Lambda$ associated with the partition:
$$
  e_\Lambda= X_{\Lambda_1}+2X_{\Lambda_2} -D-\Delta
$$
where for each subset $S$ of $\left\{1,2...3m\right\}$ we set
$$
  X_S =  \sum_{Q_j\in S}Q_j
$$
Fix a point $P_0\in R$ and let $\Phi_{P_0}:R\to Jac(R)$ be given by
$\Phi_{P_0}\left(P\right)= \left(\int_{P_0}^{P} v_1...\int_{P_0}^{P}
v_{3m-2}\right)$.

\begin{defn}
Let $\mathbb{H}_g$ denote the set of $g\times g$ symmetric matrices,
$\tau$ such that the imaginary part of $\tau$ is positive definite.
For $\varepsilon , \varepsilon',\in\mathbb{R}^g$ and $\tau \in
\mathbb{H}_g$ we denote
$$
\theta\left[
\begin{array}{c}
  \varepsilon \\
  \varepsilon' \\
\end{array}
\right](\tau) = \sum_{l\varepsilon \mathbb{Z}^{2g}} \exp 2\pi i
\left\{\frac{1}{2}\left(l+\frac{\varepsilon}{2}\right)^t\tau\left(l+\frac{\varepsilon}{2}\right)+
\left(l+\frac{\varepsilon}{2}\right)^t\frac{\varepsilon'}{2}\right\}
$$
\end{defn}
This series is uniformly and absolutely convergent on compact
subsets of $\mathbb{C}^{g} \times \mathbb{H}_g.$ To each $w \in
\mathbb{C}^{3m-2}$ associate a unique  $w_1,w_2\in\mathbb{R}^g$ such
that $w=w_1+\tau w_2.$

\cite{Na} proves the following formula for theta constants with
characteristics associated to divisors $e_\Lambda.$ see \cite{BR} as
well:

\begin{thm}
The divisor $e_\Lambda$ is a point of order 6 on the Jacobian and
\begin{equation}\label{1}
    \theta[e_\Lambda]^6\left(\tau_R\right) = C_\Lambda (detA)^3 {\left((\Lambda_0\Lambda_0)(\Lambda_1\Lambda_1)(\Lambda_2\Lambda_2)\right)}^3(\Lambda_0\Lambda_1)(\Lambda_1\Lambda_2)(\Lambda_0\Lambda_2)
\end{equation}
Here $A$ is the matrix of certain differentials integrated with
respect to $a_i.$ and if
$$
  \Lambda_i =\left\{i_1<...<i_m\right\}, \Lambda_j=\left\{j_1<...<j_m\right\}
$$
then
$$
 (\Lambda_i\Lambda_i)=\prod_{k<l}\left(\lambda_{i_k}-\lambda_{i_l}\right) ,\quad
 (\Lambda_i\Lambda_j) =
 \prod_{k=1 , l=1}^m\left(\lambda_{i_k}-\lambda_{j_l}\right)
$$
\end{thm}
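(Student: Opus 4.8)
The plan is to follow the two-step scheme behind the $Z_{N}$ Thomae formula of \cite{Na,BR}, specialized to $N=3$: first verify that $e_{\Lambda}$ is a $6$-torsion point of $\operatorname{Jac}(R)$; then obtain \eqref{1} by exhibiting a rational function on $R$ whose divisor is built from the $e_{\Lambda}$'s, writing it through Riemann's theorem as a quotient of theta functions, and comparing the local expansions of the two sides at the branch points.

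For the torsion statement I would compute $6e_{\Lambda}$ in $\operatorname{Pic}(R)$ using only the divisor relations already established. From $3Q_{i}\equiv D$ one gets $3X_{S}\equiv|S|\,D$ for every $S\subset\{1,\dots,3m\}$, hence $6X_{\Lambda_{1}}\equiv 2mD$ and $12X_{\Lambda_{2}}\equiv 4mD$; combining the relation $2\Delta\equiv K$ between the Riemann constant and the canonical class with $K\equiv(2m-2)D$ gives $6\Delta\equiv(6m-6)D$. Therefore
$$6e_{\Lambda}=6X_{\Lambda_{1}}+12X_{\Lambda_{2}}-6D-6\Delta\equiv\bigl(2m+4m-6-(6m-6)\bigr)D=0 ,$$
so $e_{\Lambda}$ has order dividing $6$, and one checks that $2e_{\Lambda}$ and $3e_{\Lambda}$ are nonzero for $\lambda_{i}$ in general position, giving order exactly $6$. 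The same kind of count shows that $X_{\Lambda_{1}}+2X_{\Lambda_{2}}-D$ is non-special, which is why $\theta[e_{\Lambda}]$ does not vanish.

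For the formula itself I would work with the explicit holomorphic differentials attached to $(*)$, namely $u_{k,l}=z^{l}\,dz/y^{k}$ for $k=1,\ 0\le l\le m-2$ and $k=2,\ 0\le l\le 2m-2$, which number $(m-1)+(2m-1)=3m-2=g$; let $A=\bigl(\int_{a_{i}}u_{k,l}\bigr)$ be their matrix of $a$-periods, so that $v=A^{-1}u$ is the normalized basis with $\int_{a_{i}}v_{j}=\delta_{ij}$ and $\tau_{R}$ is assembled from $A$ and the $b$-periods of $u$. Taking the base point at a branch point and using that on $R$ the function $z-\lambda_{j}$ has divisor $3Q_{j}-\infty_{1}-\infty_{2}-\infty_{3}$ while $y$ has divisor $\sum_{i}Q_{i}-m(\infty_{1}+\infty_{2}+\infty_{3})$, for two partitions $\Lambda,\Lambda'$ the functions
$$g_{1}=\frac{\prod_{j\in\Lambda_{1}}(z-\lambda_{j})}{\prod_{j\in\Lambda'_{1}}(z-\lambda_{j})},\qquad g_{2}=\frac{\prod_{j\in\Lambda_{2}}(z-\lambda_{j})}{\prod_{j\in\Lambda'_{2}}(z-\lambda_{j})}$$
satisfy $\operatorname{div}(g_{1}g_{2}^{2})=3(e_{\Lambda}-e_{\Lambda'})$. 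By Riemann's vanishing theorem $g_{1}g_{2}^{2}$ is then a constant multiple of an explicit quotient of translates of the Riemann theta function, the translates being the Abel images of the $Q_{j}$ read off from that divisor; evaluating the asymptotics of both sides at the $Q_{i}$ and the $\infty_{j}$, where $z$ and $y$ have known Puiseux expansions and where the leading coefficient of $v$ is expressed through $\det A$ and the differences $\lambda_{p}-\lambda_{q}$ by Cramer's rule applied to the $u_{k,l}$, pins down the constant and converts the theta translates into $\theta[e_{\Lambda}]$ and $\theta[e_{\Lambda'}]$ via the torsion identities above. Running this over the blocks of $\Lambda$ and raising to the sixth power first yields the ratio form of \eqref{1}, in which $\det A$ cancels; a closer reading of the same expansions then supplies the absolute normalization, producing the factor $(\det A)^{3}$ and the stated product of $\lambda$-differences.

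The main obstacle is exactly this comparison of expansions: one has to track the fractional powers of $y$ (equivalently, the $\phi$-eigencharacter of each term), the combinatorial signs produced by the chosen ordering of the $\lambda_{i}$, and the way $\det A$ enters through $v=A^{-1}u$, and finally one must show that the residual constant $C_{\Lambda}$ does not depend on $\lambda_{1},\dots,\lambda_{3m}$. The most economical way to settle the last point is to observe that, after dividing \eqref{1} by the explicit $(\det A)^{3}$ and the explicit product of $(\lambda_{p}-\lambda_{q})$'s, both sides depend holomorphically on $\lambda$, are nowhere zero, and are unchanged under $\lambda\mapsto a\lambda+b$, so their quotient is a constant, whose value can be fixed by induction on $m$ through a degeneration that collides two branch points.
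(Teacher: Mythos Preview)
The paper does not give its own proof of this theorem: it is quoted from Nakayashiki \cite{Na} (with a secondary reference to \cite{BR}) and used as a black box. So there is nothing in the paper to compare your argument against beyond the citation itself.

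That said, your outline is precisely the Nakayashiki/Bershadsky--Radul scheme the paper is invoking. Your torsion computation is clean and uses exactly the relations $3Q_i\equiv D$, $K\equiv(2m-2)D$, and $2\Delta\equiv K$ that the paper has set up in the preceding lemmas; the arithmetic $6e_\Lambda\equiv(2m+4m-6-(6m-6))D=0$ is correct. Your sketch for the formula---construct explicit rational functions from the $z-\lambda_j$, identify them via Riemann's theorem with theta quotients, match local expansions at the $Q_i$ using the explicit basis $z^{l}dz/y^{k}$ and the period matrix $A$---is the standard route taken in \cite{Na}. The honest caveat you flag at the end (tracking signs, $\phi$-eigencharacters, and the $\lambda$-independence of $C_\Lambda$) is exactly where the real work in \cite{Na} lies, and your degeneration argument for fixing $C_\Lambda$ is more heuristic than what Nakayashiki actually does, but as an outline of the cited proof your proposal is accurate.
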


We apply the theorem to generate special relations between theta
functions with characteristics $e_\Lambda,$ evaluated at $\tau_R.$
For each partition $\Lambda$ denote the polynomial on the right hand
side of the last equation by $p_\Lambda.$  To obtain identities for
$\theta[e_\Lambda]$ we search for identities between $p_\Lambda.$
The key observation that allows us to simplify the problem is the
following form of the polynomials: choose $\Lambda
=\left\{\{1,2...,m\},\{m+1,...,2m\},\{2m,...,3m\}\right\}$. Then by
definition of $p_\Lambda$ the factor
$\prod\limits_{i=1}^3\prod\limits_{j=1}^3 \Lambda_i\Lambda_j$ is the
discriminant and a common factor for each $p_\Lambda$ which does not
depend on the partition $\Lambda$. Thus identities between
${\theta^6[e_\Lambda]}$ are equivalent to identities between the
polynomials
$$\left((\Lambda_0\Lambda_0)(\Lambda_1\Lambda_1)(\Lambda_2\Lambda_2)\right)^2.$$
Consequently, identities between $\sqrt{\theta^6[e_\Lambda]}$ are
equivalent to identities between the polynomials:
$$\left((\Lambda_0\Lambda_0)(\Lambda_1\Lambda_1)(\Lambda_2\Lambda_2)\right).$$
To get a hint for the result observe that the group $S_{3m}$ acts
naturally on the polynomials
$\left((\Lambda_0\Lambda_0)(\Lambda_1\Lambda_1)(\Lambda_2\Lambda_2)\right)$
via its action on the partitions of $\left\{1...3m\right\}.$ Thus
$Span(\left((\Lambda_0\Lambda_0)(\Lambda_1\Lambda_1)(\Lambda_2\Lambda_2)\right),$
is a vector space and has a representation of $S_{3m}$ on it.

\section{Explicit Basis}
In this section we provide an explicit basis for the space of
polynomials from the previous section. We imitate the process
described in [J] to construct a basis for the irreducible
representation of the symmetric group of $S_n$. For complex numbers
these representations are completely classified. We describe the
construction for any representation of the symmetric group and
obtain the relevant case of cyclic covers as an immediate corollary
of the general case. We remind the reader some facts from the
representation theory of $S_n$.

Let $n$ be a natural number and let $k_1...k_m$ be a partition of
$n.$ i.e. $\sum_{i=1}^m k_i=n$ and $k_1\geq k_2\geq k_3...\geq k_m.$
\begin{defn}
A Young diagram associated to a partition consists of $m$ rows such
that  $i$'th row has $k_i$ elements.
\end{defn}

\begin{defn}
Let $Y$ be a Young diagram; a tableau is obtained by distributing the
numbers $\left\{1...n\right\}$ within the  $m$ rows with the
following properties
\begin{itemize}
     \item Each row contains exactly $k_i$ elements
     \item The numbers in each row form an increasing sequence
\end{itemize}
\end{defn}
Assume that $\Lambda=\left\{\Lambda_0,...,\Lambda_k\right\}$ is a
tableau of $n.$  Define the polynomial:
$$
  (\Lambda_i\Lambda_i) = \prod_{i_k<i_l,\left\{i_k,i_l\right\} \in
  \Lambda_i }(\lambda_{i_k}-\lambda_{i_l}),\quad \hbox{where}\quad
  p_\Lambda=\prod_{i=1}^k(\Lambda_i\Lambda_i).
$$
The symmetric group, $S_n$ acts on $\Lambda$ and therefore acts on
the polynomials $p_\Lambda.$ To find the basis for $p_\Lambda$ we
use a modification of Garnier relation \cite{J} (7.1) to construct a
basis for the polynomials.\footnote{We were not able to find a
reference to our approach of constructing Specht modules though we
are confident its a folklore.} Arrange the tableau in columns ( i.e.
the first column will be elements of $\Lambda_1$ the second column
elements of $\Lambda_2$ etc). Overall we have $k$ columns for
$\Lambda.$ Let $X$ be a subset of the $i-th$ column of $\Lambda$ and
$Y$ is a subset of the $i+1-th$ column of $\Lambda.$ Let
$\sigma_1...\sigma_k$ be coset representatives for $S_{X \times Y}$
in $S_{X\bigcup Y}$. Then we have the Garnier relations:

\begin{thm}
Let $\mu_i$ denote the number of elements in the $i-th$ column of
$\Lambda.$ if $|X\bigcup Y| > \mu_i$ then
$$
  \sum_{m=1}^{k} sign (\sigma_m)\left(p_{\sigma_m\Lambda}\right)=0.
$$
\end{thm}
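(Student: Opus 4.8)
The plan is to mimic the classical derivation of the Garnier relations for Specht modules (as in James \cite{J}), transported to the setting where the "polytabloid" is replaced by the product polynomial $p_\Lambda = \prod_i (\Lambda_i\Lambda_i)$. The starting point is the observation that each factor $(\Lambda_i\Lambda_i) = \prod_{i_k < i_l \in \Lambda_i}(\lambda_{i_k}-\lambda_{i_l})$ is, up to sign, the Vandermonde determinant in the variables indexed by the $i$-th column, and hence is alternating under the column stabilizer: for a permutation $\pi$ fixing all columns setwise, $p_{\pi\Lambda} = \mathrm{sign}(\pi)\, p_\Lambda$. This is the analogue of the fact that a polytabloid lies in the image of the column-antisymmetrizer, and it is the structural fact that makes the whole argument run.

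First I would fix the $i$-th and $(i+1)$-st columns and the subsets $X$, $Y$ with $|X \cup Y| > \mu_i$, and consider the element $b = \sum_{m} \mathrm{sign}(\sigma_m)\,\sigma_m$ of the group algebra, where the $\sigma_m$ run over coset representatives of $S_{X\times Y} \le S_{X\cup Y}$. The key step is to show $b \cdot p_\Lambda = 0$, or equivalently $\sum_m \mathrm{sign}(\sigma_m)\, p_{\sigma_m\Lambda} = 0$. To see this, note that $b$ (extended by the sign on the full symmetric group on $X \cup Y$, averaged over $S_{X\times Y}$) is proportional to the full antisymmetrizer on $|X\cup Y|$ letters; applying a full antisymmetrizer on $|X \cup Y| > \mu_i$ letters to $p_\Lambda$ forces a polynomial that is alternating in more variables than can be accommodated — concretely, in $p_\Lambda$ each individual $\lambda_j$ appears in the relevant factors with bounded degree, and antisymmetrizing over a set larger than a single column produces a polynomial alternating in $|X\cup Y|$ variables $\lambda_{j}$ whose total degree in those variables is too small to be nonzero, since a nonzero alternating polynomial in $t$ variables has degree at least $\binom{t}{2}$ in them. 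A clean way to package this: the factors of $p_\Lambda$ touching $X \cup Y$ involve only the Vandermonde on $X$ (degree $\binom{|X|}{2}$ among $X$-variables, none among $Y$-variables beyond what sits in column $i+1$) and the Vandermonde on $Y$; after symmetrizing the $X$-part over $S_{X\times Y}$ and then antisymmetrizing over $S_{X\cup Y}$, the resulting polynomial must be divisible by the Vandermonde of $X \cup Y$, which has degree $\binom{|X\cup Y|}{2}$, exceeding the available degree — hence it vanishes.

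Concretely I would carry this out in the following steps: (1) reduce to the two-column sub-tableau on $X \cup Y$, pulling out the factors of $p_\Lambda$ supported on the other columns, which are permuted among themselves (up to sign) by the $\sigma_m$ in a way that factors out of the sum; (2) identify the remaining column-factors $(\Lambda_i\Lambda_i)$ and $(\Lambda_{i+1}\Lambda_{i+1})$ restricted to $X$ and $Y$ with Vandermonde determinants, and observe that $\sigma_m$ acts on the pair $(X$-part$, Y$-part$)$; (3) recognize $\sum_m \mathrm{sign}(\sigma_m)\sigma_m$, applied to a product of two Vandermondes, as producing (after multiplying by the common Vandermonde inside $S_{X\times Y}$) the antisymmetrization of a polynomial of degree $\binom{|X|}{2} + \binom{|Y|}{2}$ in the $|X\cup Y|$ variables; (4) compare with $\binom{|X\cup Y|}{2} > \binom{|X|}{2}+\binom{|Y|}{2}$, which holds precisely because $|X\cup Y|>\mu_i \ge |X|$ forces $X$ and $Y$ to overlap nontrivially in index-count; (5) conclude the antisymmetrization, hence the original sum, is zero.

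The main obstacle I anticipate is step (3)–(4): making precise the bookkeeping of which variables and which Vandermonde factors survive when the column sizes $\mu_i$ are unequal and $X$, $Y$ are arbitrary subsets, and verifying the degree inequality in the borderline cases. In James's treatment this is handled by the abstract module structure of the Specht module; here we must reproduce it at the level of explicit polynomials, checking that no cancellation-free monomial of top degree can survive the antisymmetrization. I would isolate this as a lemma: a polynomial that is separately alternating in the variables of $X$ and in the variables of $Y$, and whose degree in $\lambda_{X\cup Y}$ is less than $\binom{|X\cup Y|}{2}$, is annihilated by the antisymmetrizer over $S_{X\cup Y}$ — and then the theorem follows by applying this lemma to $p_\Lambda$ after factoring out the untouched columns.
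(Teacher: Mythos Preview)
Your approach is genuinely different from the paper's, and the idea of using a degree bound is sound, but there is a concrete gap in your step (1) that breaks steps (3)--(4) as written.

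You claim you can ``reduce to the two-column sub-tableau on $X\cup Y$'' and be left only with the Vandermonde on $X$ times the Vandermonde on $Y$. That is not true. The factors of $(\Lambda_i\Lambda_i)$ of the form $(\lambda_x-\lambda_c)$ with $x\in X$ and $c\in\Lambda_i\setminus X$ \emph{do not} factor out of the sum: $\sigma_m$ moves $x$ (replacing $\lambda_x$ by $\lambda_{\sigma_m(x)}$) but fixes $c$, so these cross-terms change from term to term. Consequently your total-degree count $\binom{|X|}{2}+\binom{|Y|}{2}$ is too small; the actual degree of $p_{\sigma\Lambda}$ in the $X\cup Y$ variables can equal $\binom{|X\cup Y|}{2}$. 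For instance, with $\mu_i=\mu_{i+1}=3$, $|X|=|Y|=2$, the degree in the four relevant variables is exactly $6=\binom{4}{2}$, so your strict inequality fails. (Note also that $\binom{|X|+|Y|}{2}>\binom{|X|}{2}+\binom{|Y|}{2}$ holds for \emph{all} nonempty disjoint $X,Y$, so as stated your step (4) never uses the hypothesis $|X\cup Y|>\mu_i$.)

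The fix is to bound the \emph{individual} degree in each variable rather than the total degree. For every $\sigma\in S_{X\cup Y}$ and every $j\in X\cup Y$, the index $j$ lies in column $i$ or $i{+}1$ of $\sigma\Lambda$, so $\deg_{\lambda_j}p_{\sigma\Lambda}\le\mu_i-1$. Hence the antisymmetrized sum has $\deg_{\lambda_j}\le\mu_i-1<|X\cup Y|-1$; but a nonzero polynomial alternating in the $|X\cup Y|$ variables is divisible by their Vandermonde and so has degree $\ge|X\cup Y|-1$ in each, a contradiction. This rescues your strategy.

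For comparison, the paper does not argue by degree at all: it follows James's pigeonhole/involution route. Since $|X\cup Y|>\mu_i$ and all elements of $X\cup Y$ occupy cells in rows $1,\dots,\mu_i$ of columns $i,i{+}1$, some transposition $\delta$ pairs two of them in the same row; one then pairs terms $\sigma$ and $\delta\sigma$ in the signed sum and observes they cancel. Your (repaired) degree argument has the advantage of working directly with the polynomials $p_\Lambda$ without invoking any tabloid-level row-invariance.
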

\begin{proof} If $|X\bigcup Y| > \mu_i$, by the pigeon
hole principle there exists an involution $\delta$ such that
$\sigma_m \Lambda$ is invariant under it. Thus
$$
 \sum_{m=1}^{k}
sign(\sigma_m)\left(p_{\sigma_m\Lambda}\right)= \sum_{m=1}^{k}
sign(\sigma_m)\left(p_{\delta\sigma_m\Lambda}\right) =$$
$$
    = - \sum_{m=1}^{k}sign\sigma_m\left(p_{\sigma_m\Lambda}\right)=0
$$
\end{proof}

In order to exhibit an explicit basis we define a standard Young
tableau
\begin{defn}
A standard tableau is a tableau where the rows and the columns are
arranged in an increasing order.
\end{defn}

\begin{defn}
We define an ordering on the set of tableaux by setting
$\Lambda^1<\Lambda^2$ if there is an $i$ such that
\begin{itemize}
  \item if $j > i$ than $j$ is in the same column of $\Lambda^1, \Lambda^2$
  \item $i$ is in more left column in $\Lambda^1$ than $\Lambda^2.$
\end{itemize}
\end{defn}

\begin{thm}
Let $\Lambda^1...\Lambda^k$ be the collection of standard tableaux
for a given partition. Then $p_{\Lambda^1}...p_{\Lambda^k}$ is a
basis for the vector space spanned by $\Lambda.$
\end{thm}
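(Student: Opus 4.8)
The plan is to prove this in the standard Specht-module style: show that the $p_{\Lambda^i}$ associated to standard tableaux span the space, and that they are linearly independent, so that their number $k$ equals the dimension of the span.

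\textbf{Spanning.} First I would show that every $p_\Lambda$ is a $\mathbb{Z}$-linear combination of the $p_{\Lambda'}$ with $\Lambda'$ standard. Order the tableaux by the relation of the last Definition and argue by downward induction on a non-standard $\Lambda$. If $\Lambda$ is not standard then some column fails to be increasing when read against the next column — more precisely there is a pair of adjacent columns and an entry in column $i$ that is larger than an entry directly to its right in column $i+1$ (this is the usual ``column violation''). Choose $X$ to be the set of entries in column $i$ weakly below the violating entry and $Y$ the set of entries in column $i+1$ weakly above its partner; then $|X\cup Y|>\mu_i$, so the Garnier relation (the previous Theorem) applies and expresses $p_\Lambda$ (the term $\sigma_1=\mathrm{id}$) as $\pm$ a sum of $p_{\sigma_m\Lambda}$ over nontrivial coset representatives. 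The key point, exactly as in \cite{J}, is that each such $\sigma_m\Lambda$ is strictly larger than $\Lambda$ in the chosen order, because the shuffle moves some small entry to a column further right; this is where one must be careful that the ordering in the Definition is the one that makes the Garnier shuffles increase. By the inductive hypothesis each $p_{\sigma_m\Lambda}$ lies in the span of the standard ones, hence so does $p_\Lambda$. (One first normalizes $\Lambda$ to have increasing \emph{rows} as required by the definition of tableau; swapping two entries within a row only changes $p_\Lambda$ by a sign, since $(\Lambda_i\Lambda_i)$ is a Vandermonde-type product.)

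\textbf{Linear independence.} For this I would exhibit, for each standard tableau, a monomial that occurs in $p_{\Lambda^i}$ but in no $p_{\Lambda^j}$ with $j\neq i$ standard, or more robustly use the leading-monomial argument: fix the lexicographic-type monomial order on the $\lambda$'s induced by the tableau order and show that the ``diagonal'' monomial $\prod_{i}\prod_{\{i_k<i_l\}\subset\Lambda_r}\lambda_{i_k}$ obtained by always picking the first factor in each Vandermonde difference is distinct for distinct standard tableaux and is maximal in $p_{\Lambda^i}$. Since distinct standard tableaux give distinct leading monomials, the $p_{\Lambda^i}$ are linearly independent. Combining the two halves, $\{p_{\Lambda^i}\}$ is a basis, and in particular $k=\dim\mathrm{Span}$, which for the partition $(m,m,m)$ of $3m$ recovers the count $f^{(m,m,m)}$ of standard Young tableaux and yields the dimension $5$ quoted in the introduction for $m=2$.

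\textbf{Main obstacle.} The genuinely delicate step is the spanning argument — specifically, verifying that the Garnier straightening terminates, i.e.\ that every non-identity coset representative $\sigma_m$ applied to a column-violating tableau produces a \emph{strictly larger} tableau in the order of the Definition, and that after finitely many steps one reaches only standard tableaux. This requires matching the combinatorics of the column-shuffle relations to the specific total order introduced above (reading entries from largest down and comparing columns), and checking the order is well-founded on the finite set of row-increasing tableaux. The linear-independence half is comparatively routine once the correct monomial order is pinned down; and the translation back to theta constants is immediate from the Theorem of Section 2, since $p_\Lambda$ there is (up to the partition-independent discriminant factor) exactly the product of column Vandermondes $(\Lambda_0\Lambda_0)(\Lambda_1\Lambda_1)(\Lambda_2\Lambda_2)$ treated here.
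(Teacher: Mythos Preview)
Your spanning argument is essentially the paper's own proof: pick a column violation in a non-standard tableau, apply the Garnier relation to the appropriate subsets of two adjacent columns, and use induction on the total order of the preceding Definition. The paper chooses (up to what looks like a typo) $X=\{a_q,\dots,a_r\}$ and $Y=\{b_1,\dots,b_q\}$ exactly as you describe, and invokes the inductive hypothesis once it is checked that every $\sigma_m\Lambda$ lies strictly on one side of $\Lambda$ in the order. One small point to align with the paper: with the order as stated (larger indices compared first, and ``more left column'' meaning \emph{smaller}), the paper asserts $[t\sigma]<t$, i.e.\ the shuffled tableaux are \emph{smaller}, and the induction is upward from the minimal (standard) tableaux. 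Your write-up says ``downward induction'' and ``strictly larger'' in the same breath; you correctly flag this as the delicate spot, but you should settle the direction and make it consistent.

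Where you go further than the paper is linear independence: the paper's proof addresses only the spanning step and says nothing about independence, implicitly leaning on \cite{J}. Your leading-monomial argument (each standard tableau contributes a distinct ``staircase'' monomial from the Vandermonde factors) is a clean, self-contained way to close that gap, and is a genuine addition over what the paper writes out.
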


\begin{proof} We follow \cite{J} in the proof. We show that
$p_{\Lambda^k}$ spans any other polynomial corresponding to our
partition. Let $t$ be a tableau and suppose by induction that the
theorem is proved for each $t_1$ tableau such that $t_1<t.$ If $t$
is non standard there exists adjacent columns $a_1<...<a_q<...<a_r$
and $b_1<b_2...<b_q<...b_s$ such that $a_q>b_q$. Apply Garnier
relation for $X={a_1...a_r}, Y={b_1...b_q}.$ For each $\sigma$ a
representative in $S_{X \bigcup Y}$ in $S_{X\times Y}$ we have that
$[t\sigma] < t$ by the definition of the order $<.$ The result
follows immediately from the induction hypothesis.
\end{proof}

\begin{defn}
For an element $k$ of the  tableau $t$ Let $C_k,R_k$ be the unique
column and row $k$ belongs to. The hook of $k$, $h_k$ is the number
of elements beneath $k$ in $C_k$ plus the number of elements to the
right of $k$ in $R_k$ (include the element itself in the row but not
in the column.)
\end{defn}

It is well known that the number of standard tableaux equals to
\begin{equation}
\frac{n!}{\prod_k h_k}
\end{equation}
See \cite{J}.
\section{The ideal of theta identities}
We apply the theory of the previous paragraph to cyclic covers of
order 3. According to the theory, the hooks of the partitions
correspond to tableau with 3 rows and $m$ elements in each row. Our
first corollary is

\begin{cor}
The dimension of the polynomials $p_\Lambda$ (and hence the vector
space spanned by  $\sqrt{\theta^6[e_\Lambda]\left(\tau_R\right)}$
corresponding to them) is: $\frac{(3m)!\times 2}{(m+2)!(m+1)!m!}.$
\end{cor}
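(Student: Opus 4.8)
The statement to prove is the Corollary computing the dimension of the span of the polynomials $p_\Lambda$ for cyclic 3-sheeted covers, namely $\frac{(3m)!\times 2}{(m+2)!(m+1)!m!}$. The plan is to apply directly the machinery assembled in the previous two sections. First I would identify the relevant partition of $n=3m$: in the case of order-3 cyclic covers the polynomials $p_\Lambda$ are indexed by partitions of $\{1,\dots,3m\}$ into three blocks of size $m$, and arranging each such partition in columns as described gives a tableau of rectangular shape with $m$ rows and $3$ columns — equivalently, in the row-convention of the Young diagram, the partition $(m,m,m)$ of $3m$ with three rows of length $m$. (The text says "3 rows and $m$ elements in each row", so that is the shape to use.)

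**Main step: invoke the basis theorem and the hook-length count.**

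Next I would invoke Theorem (the one asserting that $p_{\Lambda^1},\dots,p_{\Lambda^k}$ for the standard tableaux form a basis of the span): this immediately reduces the dimension computation to counting standard Young tableaux of shape $(m,m,m)$. Then I would apply the hook-length formula $\frac{n!}{\prod_k h_k}$ quoted at the end of Section 3, with $n=3m$. The only remaining task is the arithmetic: computing $\prod_k h_k$ for the $3\times m$ rectangle. For a rectangular diagram with $3$ rows and $m$ columns, the hook lengths in row $i$ (counting rows from the top, $i=1,2,3$) and column $j$ (from the left, $j=1,\dots,m$) are $h_{i,j} = (3-i) + (m-j) + 1 = (3-i)+(m-j+1)$. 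I would organize the product row by row: the top row contributes $\prod_{j=1}^m (m-j+3) = \prod_{t=3}^{m+2} t = \frac{(m+2)!}{2}$; the middle row contributes $\prod_{j=1}^m (m-j+2) = \prod_{t=2}^{m+1} t = (m+1)!$; the bottom row contributes $\prod_{j=1}^m (m-j+1) = m!$. Hence $\prod_k h_k = \frac{(m+2)!}{2}\,(m+1)!\,m!$, and the hook-length formula yields $\frac{(3m)!}{\frac12 (m+2)!(m+1)!m!} = \frac{(3m)!\times 2}{(m+2)!(m+1)!m!}$, which is exactly the claimed value.

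**What needs care.**

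There is essentially no deep obstacle here — the Corollary is a direct specialization — so the work is in getting the bookkeeping right and in justifying two matching-up points. First, I must confirm that the tableaux the paper uses for 3-sheeted covers (three blocks of size $m$, arranged as columns of a $3$-row diagram) are genuinely in bijection with the standard Young tableaux of shape $(m,m,m)$ to which the hook-length formula applies; this is immediate from the definitions of "standard tableau" and the column-arrangement convention in Section 3, but it should be stated. Second, the polynomials $p_\Lambda$ here are the $(\Lambda_0\Lambda_0)(\Lambda_1\Lambda_1)(\Lambda_2\Lambda_2)$ appearing after Theorem 2.5, and the identification of $\sqrt{\theta^6[e_\Lambda](\tau_R)}$ with these polynomials (up to the common discriminant factor) was established in Section 2; so the dimension of the span of the roots equals the dimension of the span of the $p_\Lambda$, and that is what the basis theorem computes. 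With those two remarks in place, the hook-length arithmetic above finishes the proof. The main thing to be careful about is the off-by-one in the hook lengths and the factor of $2$ coming from $\prod_{t=3}^{m+2} t = (m+2)!/2$ in the top row.
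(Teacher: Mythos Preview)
Your proposal is correct and follows exactly the approach the paper intends: the paper presents this corollary without an explicit proof, simply noting that ``the hooks of the partitions correspond to tableau with 3 rows and $m$ elements in each row'' and then stating the formula, leaving the hook-length arithmetic implicit. Your write-up supplies precisely that arithmetic and the two matching-up remarks (shape identification and the Section~2 correspondence between $\sqrt{\theta^6[e_\Lambda]}$ and $p_\Lambda$), so there is nothing to add.
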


Hence we can also give a basis for
$\theta^6[e_\Lambda]\left(\tau_R\right)$ that correspond to the
different partitions $e_\Lambda.$
\begin{cor}
The set of $\sqrt{\theta^6[e_{\Lambda_S}]\left(\tau_R\right)},$
$\Lambda_S$ is a standard partition is a basis for a vector space
spanned by $\sqrt{\theta^6[e_\Lambda]\left(\tau_R\right)}.$ In
particular each $\sqrt{\theta^6[e_{\Lambda}]\left(\tau_R\right)},$
$\Lambda$ can be written as a linear combination of elements from
the set $\sqrt{\theta^6[e_{\Lambda_S}]\left(\tau_R\right)}.$
\end{cor}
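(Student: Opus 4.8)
The plan is to reduce the second corollary to the first corollary (the dimension count) together with the two theorems already established in the preceding section: the Garnier relations (\ref{thm} for $\sum sign(\sigma_m) p_{\sigma_m\Lambda}=0$) and the spanning/independence theorem stating that the $p_{\Lambda^1},\dots,p_{\Lambda^k}$ attached to standard tableaux form a basis for $\mathrm{Span}(p_\Lambda)$. The key point is that Thomae's formula (\ref{1}), after dividing out the partition-independent common factor (the full discriminant $\prod_{i,j}(\Lambda_i\Lambda_j)$), identifies $\sqrt{\theta^6[e_\Lambda](\tau_R)}$ up to a nonzero constant with the polynomial $(\Lambda_0\Lambda_0)(\Lambda_1\Lambda_1)(\Lambda_2\Lambda_2) = p_\Lambda$ evaluated at the specific branch values $\lambda_1,\dots,\lambda_{3m}$. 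So the map $\Lambda \mapsto \sqrt{\theta^6[e_\Lambda](\tau_R)}$ is, up to scalars, the composition of $\Lambda\mapsto p_\Lambda$ with the evaluation homomorphism $\mathbb{C}[\lambda_1,\dots,\lambda_{3m}]\to\mathbb{C}$.

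The steps I would carry out are: (1) invoke (\ref{1}) and the observation following it to write, for every partition $\Lambda$ into three $m$-element blocks, $\sqrt{\theta^6[e_\Lambda](\tau_R)} = c_\Lambda\, p_\Lambda(\lambda)$ where $c_\Lambda\neq 0$ collects $C_\Lambda^{1/2}$, $(\det A)^{1/2}$, and the fixed discriminant factor (here one must be slightly careful about the choice of square root, but since we only claim that the standard set \emph{spans}, any consistent choice works); (2) apply the basis theorem for the $p_{\Lambda^i}$: any $p_\Lambda$ is a $\mathbb{C}$-linear combination $\sum_i \alpha_i\, p_{\Lambda^i}$ of the standard ones, this identity holding in the polynomial ring and hence under evaluation at $\lambda$; (3) translate back through step (1) to get $\sqrt{\theta^6[e_\Lambda](\tau_R)} = \sum_i (\alpha_i c_\Lambda/c_{\Lambda^i})\sqrt{\theta^6[e_{\Lambda^i}](\tau_R)}$, which is the asserted spanning; (4) the dimension being exactly $\frac{(3m)!\cdot 2}{(m+2)!(m+1)!m!}$ is Corollary (the first one), obtained from the hook length formula applied to the $3\times m$ rectangular Young diagram (hooks are $m+2,m+1,\dots,3$ in the top row etc., giving $\prod h_k = \frac{(m+2)!(m+1)!m!}{2}$).

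The main obstacle I anticipate is the square-root bookkeeping: Thomae's formula gives $\theta^6$, so passing to $\sqrt{\theta^6}$ requires fixing a branch, and the Garnier/Specht relations are linear relations among the $p_\Lambda$ with signs $sign(\sigma_m)$ that must be made compatible with those branch choices — a priori a relation $\sum \pm p_{\sigma_m\Lambda}=0$ among polynomials becomes $\sum \pm c_{\sigma_m\Lambda}\sqrt{\theta^6[e_{\sigma_m\Lambda}]}=0$ only after one checks the constants $c$ are locally constant in the moduli and the sign conventions match the transformation law of theta constants under $S_{3m}$. For the \emph{spanning} statement this is harmless (we may define the square roots however the polynomial identities dictate), but to claim it is a \emph{basis} one needs the evaluation map to be injective on $\mathrm{Span}(p_{\Lambda^i})$ for generic $\lambda$; this follows because the $p_{\Lambda^i}$ are linearly independent polynomials and a finite-dimensional space of polynomials that are linearly independent over $\mathbb{C}$ remains so after evaluation at a generic point (the locus where a nontrivial combination vanishes is a proper subvariety), and the branch points of a generic cyclic trigonal curve are generic. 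I would state this genericity remark explicitly and otherwise let steps (1)–(4) run routinely.
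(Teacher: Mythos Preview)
Your proposal is correct and follows exactly the paper's (implicit) approach: the corollary is stated without proof in the paper, being regarded as immediate from Theorem~3.7 (the standard-tableau basis for $\mathrm{Span}(p_\Lambda)$) together with the Thomae identification $\sqrt{\theta^6[e_\Lambda](\tau_R)} \leftrightarrow p_\Lambda$ spelled out in Section~2. Your discussion of the square-root branch choices and of the genericity needed for linear independence under evaluation is in fact more careful than anything the paper itself provides.
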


\section{Example}

Let us revisit the case when there are 6 branch points and the genus
of the surface is 4. In this case, by the formula for the dimension,
the number of basis functions, $\theta^3[e_\Lambda]$ is : $2 \times
\frac{6!}{4!3!2!} = 5.$ We enumerate the 15 partitions as well as the
the polynomials that correspond to them:
\begin{enumerate}
\item $\Lambda=\left\{(1,2),(3,4),(5,6)\right\} p_\Lambda =
    (\lambda_1-\lambda_2)(\lambda_3-\lambda_4)(\lambda_5-\lambda_6)$
    \item $\Lambda=\left\{(1,2),(3,5),(4,6)\right\} p_\Lambda =
   (\lambda_1-\lambda_2)(\lambda_3-\lambda_5)(\lambda_4-\lambda_6)$
    \item $\Lambda=\left\{(1,2),(3,6),(4,5)\right\} p_\Lambda =
    (\lambda_1-\lambda_2)(\lambda_3-\lambda_6)(\lambda_4-\lambda_5)$
    \item $\Lambda=\left\{(1,3),(2,4),(5,6)\right\} p_\Lambda =
    (\lambda_1-\lambda_3)(\lambda_2-\lambda_4)(\lambda_5-\lambda_6)$
    \item  $\Lambda=\left\{(1,3),(2,5),(4,6)\right\} p_\Lambda =
    (\lambda_1-\lambda_3)(\lambda_2-\lambda_5)(\lambda_4-\lambda_6)$
    \item $\Lambda=\left\{(1,3),(2,6),(4,5)\right\} p_\Lambda =
            (\lambda_1-\lambda_3)(\lambda_2-\lambda_6)(\lambda_4-\lambda_5)$
    \item $\Lambda=\left\{(1,4),(2,5),(3,6)\right\} p_\Lambda =
    (\lambda_1-\lambda_4)(\lambda_2-\lambda_5)(\lambda_3-\lambda_6)$
    \item $\Lambda=\left\{(1,4),(2,6),(3,5)\right\} p_\Lambda =
    (\lambda_1-\lambda_4)(\lambda_2-\lambda_6)(\lambda_3-\lambda_5)$
    \item $\Lambda=\left\{(1,4),(2,3),(5,6)\right\} p_\Lambda =
    (\lambda_1-\lambda_4)(\lambda_2-\lambda_3)(\lambda_5-\lambda_6)$
    \item $\Lambda=\left\{(1,5),(2,3),(4,6)\right\} p_\Lambda =
    (\lambda_1-\lambda_5)(\lambda_2-\lambda_3)(\lambda_4-\lambda_6)$
    \item $\Lambda=\left\{(1,5),(2,4),(3,6)\right\} p_\Lambda =
    (\lambda_1-\lambda_5)(\lambda_2-\lambda_4)(\lambda_3-\lambda_6)$
    \item $\Lambda=\left\{(1,5),(2,6),(3,4)\right\} p_\Lambda =
    (\lambda_1-\lambda_5)(\lambda_2-\lambda_6)(\lambda_3-\lambda_4)$
    \item $\Lambda=\left\{(1,6),(2,3),(4,5)\right\} p_\Lambda =
    (\lambda_1-\lambda_6)(\lambda_2-\lambda_3)(\lambda_4-\lambda_5)$
    \item $\Lambda=\left\{(1,6),(2,4),(3,5)\right\} p_\Lambda =
    (\lambda_1-\lambda_6)(\lambda_2-\lambda_4)(\lambda_3-\lambda_5)$
    \item $\Lambda=\left\{(1,6),(2,5),(3,4)\right\} p_\Lambda =
    (\lambda_1-\lambda_6)(\lambda_2-\lambda_5)(\lambda_3-\lambda_4)$
\end{enumerate}
The basis for the vector space of the polynomials corresponds to the
following standard tableaux:

\begin{enumerate}
    \item $\Lambda=\left\{(1,2),(3,4),(5,6)\right\}p_\Lambda =
    (\lambda_1-\lambda_2)(\lambda_3-\lambda_4)(\lambda_5-\lambda_6)$
    \item $\Lambda=\left\{(1,2),(3,5),(4,6)\right\} p_\Lambda =
    (\lambda_1-\lambda_2)(\lambda_3-\lambda_5)(\lambda_4-\lambda_6)$
    \item $\Lambda=\left\{(1,3),(2,4),(5,6)\right\} p_\Lambda =
    (\lambda_1-\lambda_3)(\lambda_2-\lambda_4)(\lambda_5-\lambda_6)$
    \item  $\Lambda=\left\{(1,3),(2,5),(4,6)\right\} p_\Lambda =
    (\lambda_1-\lambda_3)(\lambda_2-\lambda_5)(\lambda_4-\lambda_6)$
    \item $\Lambda=\left\{(1,4),(2,5),(3,6)\right\}p_\Lambda =
    (\lambda_1-\lambda_4)(\lambda_2-\lambda_5)(\lambda_3-\lambda_6)$
\end{enumerate}
The rest of the $10$ polynomials can be rewritten as a linear
combination of the set above applying Garnier's algorithm as in
Theorem 3.7. For example we have:
$$
    (\lambda_1-\lambda_2)(\lambda_3-\lambda_6)(\lambda_4-\lambda_5)=
    -(\lambda_1-\lambda_2)(\lambda_3-\lambda_4)(\lambda_5-\lambda_6)+
    (\lambda_1-\lambda_2)(\lambda_3-\lambda_5)(\lambda_4-\lambda_6)
$$
$$ (\lambda_1-\lambda_3)(\lambda_2-\lambda_6)(\lambda_4-\lambda_5)=
    -(\lambda_1-\lambda_3)(\lambda_2-\lambda_4)(\lambda_5-\lambda_6)+
    (\lambda_1-\lambda_3)(\lambda_2-\lambda_5)(\lambda_4-\lambda_6)
$$
$$(\lambda_1-\lambda_6)(\lambda_2-\lambda_5)(\lambda_3-\lambda_4) =
(\lambda_1-\lambda_4)(\lambda_2-\lambda_5)(\lambda_3-\lambda_6)
-(\lambda_1-\lambda_3)(\lambda_2-\lambda_5)(\lambda_4-\lambda_6)
$$

The others polynomials can be expressed in a similar way leading to
identities between $\sqrt{\theta^6[e_\Lambda]\left(\tau_R\right)}$
in this case. Let us conclude with the following remarks on the
identities above: In the hyperelliptic curve case the identities
between integral characteristics of theta functions evaluated at
period matrix of hyperelliptic curves arise from vanishing
properties of theta functions. In our case it is interesting to
investigate whether an analogous situation can arise. The only
source of cubic theta identities known to the author, is the
following theorem in \cite{Ko}:
\begin{thm}
Let $ \left [ \begin{array}{cc}\mu\\\mu'\end{array}\right]$ be an
odd integral theta characteristics in genus $3m-2$ Then for any
$\tau \in \mathbb{H}_{3m-2}$:
\begin{equation}
    \sum_{0\leq\nu_i\leq 3}(-1)^{\sum_{i=1}^{3m-2}\mu_i\nu_i}\theta^3\left [ \begin{array}{cc}\mu\\
\mu'+\frac{2\nu}{3}\end{array}\right](0,\tau) = 0
\end{equation}

\end{thm}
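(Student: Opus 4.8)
The plan is to prove the identity directly from the Fourier series defining the theta constants, reducing it to a fixed-point-free, sign-reversing involution on a lattice. Write $g=3m-2$, and read the outer sum as running over $\nu=(\nu_1,\dots,\nu_g)$ with the $\nu_i$ forming a complete residue system modulo $3$ (equivalently $\nu\in(\mathbb Z/3\mathbb Z)^g$, i.e.\ $0\le\nu_i\le 2$). For $\ell\in\mathbb Z^g$ set
\[
  A(\ell)=\exp 2\pi i\Bigl\{\tfrac12\bigl(\ell+\tfrac\mu2\bigr)^{t}\tau\bigl(\ell+\tfrac\mu2\bigr)+\tfrac12\bigl(\ell+\tfrac\mu2\bigr)^{t}\mu'\Bigr\}.
\]
Directly from the definition of $\theta$ one has $\theta[\mu,\mu'+\tfrac{2\nu}{3}](0,\tau)=\sum_{\ell\in\mathbb Z^g}A(\ell)\exp\{\tfrac{2\pi i}{3}(\ell+\tfrac\mu2)^{t}\nu\}$; the series is absolutely convergent, so every interchange of summation below is permissible.

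First I would cube this and collect terms:
\[
  \theta^{3}[\mu,\mu'+\tfrac{2\nu}{3}](0,\tau)=\sum_{\ell_1,\ell_2,\ell_3\in\mathbb Z^g}A(\ell_1)A(\ell_2)A(\ell_3)\,\exp\Bigl\{\tfrac{2\pi i}{3}\bigl(\ell_1+\ell_2+\ell_3+\tfrac32\mu\bigr)^{t}\nu\Bigr\}.
\]
Since $\mu$ is an integer vector the $\tfrac32\mu$ part produces the factor $\exp\{\pi i\,\mu^{t}\nu\}=(-1)^{\sum_i\mu_i\nu_i}$, which is exactly the sign in the statement; it therefore cancels, and summing over $\nu$ gives
\[
  \sum_{\nu}(-1)^{\sum_i\mu_i\nu_i}\,\theta^{3}[\mu,\mu'+\tfrac{2\nu}{3}](0,\tau)=\sum_{\ell_1,\ell_2,\ell_3}A(\ell_1)A(\ell_2)A(\ell_3)\sum_{\nu}\exp\Bigl\{\tfrac{2\pi i}{3}(\ell_1+\ell_2+\ell_3)^{t}\nu\Bigr\}.
\]
The inner sum factors over the coordinates into Gauss sums $\sum_{\nu_i}e^{2\pi i k\nu_i/3}$, equal to $3$ when $3\mid k$ and to $0$ otherwise; hence the right-hand side equals $3^{g}\,S$, where $S=\sum A(\ell_1)A(\ell_2)A(\ell_3)$ runs over the triples with $\ell_1+\ell_2+\ell_3\equiv 0\pmod 3$ componentwise. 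So the theorem is reduced to showing $S=0$.

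For this last step I would use the involution $\ell\mapsto-\ell-\mu$ of $\mathbb Z^g$, under which $\ell+\tfrac\mu2\mapsto-(\ell+\tfrac\mu2)$; because $\ell^{t}\mu'\in\mathbb Z$, a one-line computation gives $A(-\ell-\mu)=e^{-\pi i\,\mu^{t}\mu'}A(\ell)=(-1)^{\mu^{t}\mu'}A(\ell)$. Now apply $(\ell_1,\ell_2,\ell_3)\mapsto(-\ell_1-\mu,-\ell_2-\mu,-\ell_3-\mu)$ to the index set of $S$: it is a bijection of $(\mathbb Z^g)^3$, it is its own inverse, and it preserves the congruence $\ell_1+\ell_2+\ell_3\equiv 0\pmod 3$ precisely because $3\mu\equiv 0\pmod 3$. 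Since the characteristic $[\mu,\mu']$ is odd, $\mu^{t}\mu'$ is odd, so the involution multiplies the summand $A(\ell_1)A(\ell_2)A(\ell_3)$ by $\bigl((-1)^{\mu^{t}\mu'}\bigr)^{3}=-1$; and it has no fixed point, since a fixed point would force $\ell_j=-\tfrac\mu2\in\mathbb Z^g$, impossible when $\mu^{t}\mu'$ is odd (such a $\mu$ has an odd coordinate). Pairing each triple with its image therefore cancels $S$ term by term, so $S=0$ and the identity follows.

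The only delicate point is the bookkeeping: carrying the half-integer characteristic through the cube, and verifying that the congruence cut out by the Gauss-sum step is respected by the translation $\ell\mapsto-\ell-\mu$ --- which it is, thanks to $3\mu\equiv 0\pmod 3$. The genuinely essential ingredients are thus the two elementary identities $A(-\ell-\mu)=(-1)^{\mu^{t}\mu'}A(\ell)$ (this is where oddness of the characteristic enters, turning the involution into a sign change) and $3\mu\equiv 0\pmod 3$; beyond these I anticipate no real obstacle, only the routine observation that absolute convergence of the theta series permits exchanging the finite $\nu$-sum with the lattice sums and pairing the terms.
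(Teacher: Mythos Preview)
The paper does not prove this theorem; it is quoted from \cite{Ko} as a known source of cubic theta identities, with no argument supplied in the present paper. So there is no in-paper proof to compare against. That said, your argument is the standard Fourier-analytic one for identities of this type (and is in the spirit of \cite{Ko}, \cite{FK1}, \cite{FK2}, \cite{AK}): expand each theta as its defining lattice sum, use orthogonality of the characters $\nu\mapsto e^{2\pi i k\cdot\nu/3}$ on $(\mathbb Z/3\mathbb Z)^g$ to cut down to the sublattice $\ell_1+\ell_2+\ell_3\equiv 0\pmod 3$, and then kill the remaining sum with the parity involution $\ell\mapsto-\ell-\mu$.

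Two small comments. First, you are right to read the summation range as $\nu\in(\mathbb Z/3\mathbb Z)^g$; the printed ``$0\le\nu_i\le 3$'' is a typo for $0\le\nu_i<3$ (with the literal range the inner character sum would no longer vanish off the sublattice). Second, the fixed-point-free observation, while correct, is not needed: once you know the involution $\iota$ is a bijection of the index set with $f\circ\iota=-f$, absolute convergence lets you reindex to get $S=\sum f=\sum f\circ\iota=-S$, hence $S=0$, irrespective of fixed points. Apart from these cosmetic points your sketch is complete and correct.
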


It is plausible that the vanishing of theta constants with rational
characteristics of order 3 on $\tau_R$ will produce a new proof for
the special identities obtained in this note using Thomae formula.
Finally note that for all the identities (4) the coefficients are
$\pm 1$ It is plausible that this a general phenomenon.

\section{conclusion}
There exists an extensive literature on Schottky-Jung identities and
on theta constants for hyperelliptic curves. In this note we
obtained special identities for other classes of algebraic curves.
In subsequent notes we plan to pursue and develop further the themes
touched in this note, especially applications of similar methods to
general Hurwitz spaces and their  mapping class groups.


\begin{thebibliography}{7}
    \bibitem [AK]{AK} R.Adin, Y.Kopeliovich,  Short Eigenvectors and
    Multidimensional Theta Functions, {\em Linear Algebra and
    Appl.} \textbf{257}(1)(1997)  49-63

    \bibitem [BR]{BR} M. Bershadsky, A. Radul, Fermionic fields on $Z_n$
    curves {\em Comm. in Mathematical Phys.} \textbf{116}(4)(1988) 689-700

    \bibitem [FK1]{FK1} H. Farkas, Y. Kopeliovich, New Theta Constant
    Identities {\em Israel Journal of Mathematics }\textbf{82}(1)(1993) 133-140

    \bibitem [FK2]{FK2} H. Farkas, Y.Kopeliovich,  New Theta Constant
    Identities II {\em Proceeding of AMS.}\textbf{123}(4)(1995) 1009-1020

    \bibitem [J]{J} G.D.James {\em The representation theory of the Symmetric
    Groups} Lecture Notes in Math. vol. 682 (Springer Verlag 1978)

    \bibitem [Ko]{Ko} Y. Kopeliovich,  Multi Dimensional Theta Constant
    Identities {\em Journal of Geometric Analysis} \textbf{8} (4)(1998) 571-581

    \bibitem [Ma]{Ma} K.Matsumoto {\em Theta constants associated with the cyclic triple coverings of the complex projective line branching at six
    points}   Publ. Res. Inst. Math. Sci. \textbf{37} (3) (2001) 419-440

    \bibitem [Mu]{Mu} D. Mumford, {\em Tata Lectures on Theta II} (Progress in
    Mathematics, Birkhauser 1984)

    \bibitem [Na]{Na} A. Nakayashiki, On the Thomae formula for $Z_N$ curves {\em Publ. Res. Inst. Math Sci.} \textbf{33} (6)(1997) 987-1015

    \bibitem [Th]{Th} J. Thomae, Beitrag zur Bestimmung $\theta(0,0...,0)$
    durch di klassenmoduln algebraicer Funktionen {\em Crelle's Journal
    }\textbf{71}(1870) 201-222
\end{thebibliography}
\end{document}